
\documentclass[12pt,reqno]{amsart}
\usepackage{fullpage}
\usepackage{amsmath}
\usepackage{mathrsfs,amssymb,graphicx,verbatim,hyperref,mathtools}
\usepackage{paralist}
\renewcommand{\setminus}{\smallsetminus}
\usepackage{ifthen}

\addtolength{\footskip}{17pt}

\newcommand{\ifsodaelse}[2]{\ifthenelse{\isundefined{\SODAF}}{#2}{#1}}
\ifsodaelse    {\usepackage{ltexpprt}\usepackage{amsmath}}{}
\usepackage{mathrsfs,amssymb,graphicx,verbatim}
\usepackage{paralist}

\newcommand\remove[1]{}

\newcommand{\rnote}[1]{}
\newcommand{\jnote}[1]{}

\newcommand{\sign}{\mathrm{sign}}
\renewcommand{\d}{\delta}
\newcommand{\D}{\mathbb D}

\newcommand{\e}{\varepsilon}
\newcommand{\R}{\mathbb{R}}

\newcommand{\E}{\mathbb{E}}

\newcommand{\N}{\mathbb{N}}

\newcommand{\C}{\mathbb{C}}

\newtheorem{theorem}{Theorem}[section]
\newtheorem{lemma}[theorem]{Lemma}

\newcommand{\eqdef}{\stackrel{\mathrm{def}}{=}}
\date{}

\renewcommand{\le}{\leqslant}
\renewcommand{\ge}{\geqslant}







  



\include{psfig}


\renewcommand{\epsilon}{\varepsilon}

\theoremstyle{remark}

\renewcommand{\phi}{\varphi}
\renewcommand{\S}{\mathbb{S}}
\renewcommand{\H}{\mathcal{H}}

\title{Krivine schemes are optimal}
\thanks{A.N. was supported by NSF grant CCF-0832795, BSF grant
 2010021, the Packard Foundation and the Simons Foundation. O. R. was supported by a European Research Council (ERC) Starting Grant.}

\author{Assaf Naor}

\author{Oded Regev}

\date{}

\begin{document}

\begin{abstract}
It is shown that for every $k\in \N$ there exists a Borel probability measure $\mu$ on $\{-1,1\}^{\R^{k}}\times \{-1,1\}^{\R^{k}}$ such that for every $m,n\in \N$ and $x_1,\ldots, x_m,y_1,\ldots,y_n\in \S^{m+n-1}$ there exist $x_1',\ldots,x_m',y_1',\ldots,y_n'\in \S^{m+n-1}$ such that if $G:\R^{m+n}\to \R^k$ is a random $k\times (m+n)$ matrix whose entries are i.i.d.\ standard Gaussian random variables then for all $(i,j)\in \{1,\ldots,m\}\times \{1,\ldots,n\}$ we have
\begin{equation*}
\E_G\left[\int_{\{-1,1\}^{\R^{k}}\times \{-1,1\}^{\R^{k}}}f(Gx_i')g(Gy_j')d\mu(f,g)\right]=\frac{\langle x_i,y_j\rangle}{(1+C/k)K_G},
\end{equation*}
where $K_G$ is the real Grothendieck constant and $C\in (0,\infty)$ is a universal constant. This establishes that Krivine's rounding method yields an arbitrarily good approximation of $K_G$.
\end{abstract}

\maketitle

\section{introduction}\label{sec:intro}

Grothendieck's inequality~\cite{Gro53,LP68} asserts that there exists a constant $K\in (0,\infty)$ such that for every $n,m\in \N$, every $m\times n$ matrix  $A=(a_{ij})$ with real entries, and every choice of unit vectors $x_1,\ldots,x_m,y_1,\ldots,y_n\in \S^{m+n-1}$, there are signs $\e_1,\ldots,\e_m,\d_1,\ldots,\d_n\in \{-1,1\}$ satisfying
\begin{equation}\label{eq:def gro ineq}
\sum_{i=1}^m\sum_{j=1}^n a_{ij}\langle x_i,y_j\rangle \le K \sum_{i=1}^m\sum_{j=1}^n a_{ij}\e_i\d_j.
\end{equation}
The infimum over those $K\in (0,\infty)$ for which~\eqref{eq:def gro ineq} holds true is called the (real) Grothendieck constant and is denoted $K_G$. The exact value of $K_G$ remains a long-standing mystery ever since Grothendieck's original 1953 work~\cite{Gro53}. The best known bounds on $K_G$ are due to~\cite{Ree91,BMMN11}.

We call a Borel probability measure $\mu$ on $\{-1,1\}^{\R^{k}}\times \{-1,1\}^{\R^{k}}$
 a $k$-dimensional {\em Krivine scheme} of quality $K\in (0,\infty)$ if for every $m,n\in \N$ and every choice of unit vectors $x_1,\ldots, x_m,y_1,\ldots,y_n\in \S^{m+n-1}$ there exist new unit vectors $x_1',\ldots,x_m',y_1',\ldots,y_n'\in \S^{m+n-1}$ such that if $G:\R^{m+n}\to \R^k$ is a random $k\times (m+n)$ matrix whose entries are i.i.d.\ standard Gaussian random variables then for all $(i,j)\in \{1,\ldots,m\}\times \{1,\ldots,n\}$ we have
\begin{equation}\label{eq:scheme identity}
\E_G\left[\int_{\{-1,1\}^{\R^{k}}\times \{-1,1\}^{\R^{k}}}f(Gx_i')g(Gy_j')d\mu(f,g)\right]=\frac{1}{K}\langle x_i,y_j\rangle.
\end{equation}
For~\eqref{eq:scheme identity} to make sense we need to require that the function of $G$ appearing under the expectation in~\eqref{eq:scheme identity}  is Lebesgue integrable. The required integrability will be immediate for the Krivine schemes that we discuss below, but in general this assumption should be taken to be part of the definition of a Krivine scheme.

The existence of  a $k$-dimensional  Krivine scheme of quality $K$ implies that $K_G\le K$. Indeed, since $f(Gx_i'),g(Gy_j')\in \{-1,1\}$ point-wise,
\begin{multline}\label{eq:apply scheme}
\sum_{i=1}^m\sum_{j=1}^n a_{ij}\langle x_i,y_j\rangle =K \E_G\left[\int_{\{-1,1\}^{\R^{k}}\times \{-1,1\}^{\R^{k}}}\left(\sum_{i=1}^m\sum_{j=1}^n a_{ij}f(Gx_i')g(Gy_j')\right)d\mu(f,g)\right]\\\le
K\max_{\e_1,\ldots,\e_m,\d_1,\ldots,\d_n\in \{-1,1\}}\sum_{i=1}^m\sum_{j=1}^n a_{ij}\e_i\d_j.
\end{multline}

A different way to describe the above reasoning is via the following procedure that, given unit vectors $x_1,\ldots,x_m,y_1,\ldots,y_n\in \S^{m+n-1}$, produces signs $\e_1,\ldots,\e_m,\d_1,\ldots,\d_n\in \{-1,1\}$ so as to satisfy the desired inequality~\eqref{eq:def gro ineq}. Identify $f\in \{-1,1\}^{\R^k}$ with the bipartition of $\R^k$ given by $f^{-1}(-1), f^{-1}(1)\subseteq \R^k$. Thus $\mu$ could be thought of as a carefully chosen notion of a random pair $(\{A,\R^k\setminus A\}, \{B,\R^k\setminus B\})$ of bipartitions of $\R^k$. The new unit vectors $x_1',\ldots,x_m',y_1',\ldots,y_n'\in \S^{m+n-1}$ should be thought of as the result of {\em preprocessing} the initial vectors  $x_1,\ldots, x_m,y_1,\ldots,y_n\in \S^{m+n-1}$, in anticipation of the ``random projection" step to come. In this step one chooses a random $k\times (m+n)$ Gaussian matrix $G$, and, independently of $G$, a pair of bipartitions $(\{A,\R^k\setminus A\}, \{B,\R^k\setminus B\})$ whose law is $\mu$. One then defines {\em random signs} $\e_1,\ldots,\e_m,\d_1,\ldots,\d_n\in \{-1,1\}$ by
$$
\e_i\eqdef \left\{\begin{array}{ll}1&\mathrm{if\ }Gx_i'\in A,\\
-1&\mathrm{if\ }Gx_i'\in \R^k\setminus A,\end{array}\right.\quad\mathrm{and}\quad \d_j\eqdef \left\{\begin{array}{ll}1&\mathrm{if\ }Gy_j'\in B,\\
-1&\mathrm{if\ }Gy_j'\in \R^k\setminus B.\end{array}\right.
$$
Inequality~\eqref{eq:apply scheme} means that for this choice of random signs the desired Grothendieck inequality~\eqref{eq:def gro ineq} holds {\em in expectation}, with respect to the randomness of the matrix $G$ and the pair of bipartitions  $(\{A,\R^k\setminus A\}, \{B,\R^k\setminus B\})$.

The preprocessing step of a Krivine scheme has been somewhat
suppressed in the above discussion because  it is essentially
determined by the measure $\mu$ itself. Indeed, by rotation
invariance the left hand side of~\eqref{eq:scheme identity} depends
only on $\langle x_i',y_j'\rangle$; say, it equals $\Psi(\langle
x_i',y_j'\rangle)$ for some function $\Psi:[-1,1]\to [-1,1]$. If $\Psi$
were invertible on $[-1/K,1/K]$ then~\eqref{eq:scheme identity} would become
$\langle x_i',y_j'\rangle=\Psi^{-1}(\langle x_i,y_j\rangle/K)$. This
means that the mutual angles between the new unit vectors (if they
indeed exist) are determined by~\eqref{eq:scheme identity} and the
initial configuration of vectors. Only these angles matter for the
purpose of applying a Krivine scheme to bound $K_G$.

We note that in the known examples of Krivine schemes (including those described below) the preprocessing step that associates new unit vectors $x_1',\ldots,x_m',y_1',\ldots,y_n'\in \S^{m+n-1}$ to the initial unit vectors $x_1,\ldots, x_m,y_1,\ldots,y_n\in \S^{m+n-1}$ is {\em oblivious} to the initial configuration in the following sense. There exist (nonlinear) mappings $S,T:\S^{\infty}\to \S^\infty$, where $\S^\infty$ is the unit sphere of $\ell_2$, such that if we identify the span of $T(x_1),\ldots,T(x_m),S(y_1),\ldots,S(y_n)$ with a subspace of $\R^{m+n}$ then~\eqref{eq:scheme identity} holds true with $x_i'=T(x_i)$ and $y_j'=S(y_j)$. We call such Krivine schemes {\em oblivious Krivine schemes}. It is possible to show that any Krivine scheme satisfying an additional continuity assumption is oblivious. For all purposes that we can imagine, one can take the definition of a Krivine scheme to include the assumption that it is oblivious. 


\begin{theorem}[Optimality of Krivine schemes]\label{thm:main}
For every $k\in \N$ there exists a $k$-dimensional oblivious Krivine scheme of quality $(1+O(1/k))K_G$. 
\end{theorem}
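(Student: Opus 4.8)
The strategy is to start from the classical Krivine rounding scheme and improve it by composing the Gaussian projection with a cleverly chosen auxiliary rounding that removes the "slack'' in Krivine's argument. Recall that Krivine's original bound $K_G \le \pi/(2\ln(1+\sqrt2))$ comes from the following setup: one preprocesses $x\mapsto T(x)$, $y\mapsto S(y)$ so that the Hermite expansions of the single-bit rounding $\sign(Gx')$, $\sign(Gy')$ produce the identity $\E[\sign(\langle G,x'\rangle)\sign(\langle G,y'\rangle)] = \frac{2}{\pi}\arcsin\langle x',y'\rangle$, and then one chooses $T,S$ so that $\frac{2}{\pi}\arcsin\langle T(x),S(y)\rangle$ is a \emph{linear} function of $\langle x,y\rangle$ — this forces the scaling factor to be exactly $\sinh^{-1}(1)= \ln(1+\sqrt2)$, i.e.\ quality $\pi/(2\ln(1+\sqrt 2))$. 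The loss relative to $K_G$ is entirely due to the fact that the one-dimensional rounding function $\sign$ realizes only the single nonlinear profile $\frac2\pi\arcsin$. The key idea is that by using a $k$-dimensional rounding — a measure $\mu$ on pairs of $\pm1$-valued functions on $\R^k$ — one can realize a much richer family of profiles $\Psi(t)= \E_G\E_\mu[f(Gx')g(Gy')]$, and as $k\to\infty$ this family becomes rich enough to (almost) reproduce whatever nonlinear profile is implicitly used by a near-optimal configuration witnessing $K_G$.

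Concretely, the steps I would carry out are: (1) Fix $\eps>0$ and a finite configuration $x_1,\dots,x_m,y_1,\dots,y_n$ together with a coupling of $\pm1$ random variables that witnesses $K_G$ up to $1+\eps$; equivalently, extract from the definition of $K_G$ a ``Grothendieck-optimal'' correlation structure. (2) Use the Gaussian matrix $G:\R^{m+n}\to\R^k$ together with the tensor-power / Hermite machinery to show that, for $k$ large, the set of achievable profiles $\Psi$ obtained by ranging over measures $\mu$ on $\{-1,1\}^{\R^k}\times\{-1,1\}^{\R^k}$ contains functions that are $O(1/k)$-close (in the relevant norm — say uniformly on $[-1/K_G,1/K_G]$, or in the sense needed to control the preprocessing) to the ``ideal'' odd profile one would want, namely one whose inverse composed with $t\mapsto t/K_G$ is realizable as $\langle T(x),S(y)\rangle$ for oblivious nonlinear maps $T,S$ into $\S^\infty$. (3) Define the oblivious preprocessing maps $T,S$ via the appropriate tensoring/normalization construction (as in Krivine's scheme but with the improved profile), check they land in $\S^\infty$, and verify that with $x_i'=T(x_i)$, $y_j'=S(y_j)$ identity~\eqref{eq:scheme identity} holds with $K=(1+O(1/k))K_G$. (4) Finally, take a weak-$*$ limit / compactness argument to pass from the finite configuration to a single universal measure $\mu$ working for all $m,n$ simultaneously, and check the integrability needed for~\eqref{eq:scheme identity} to be well-defined.

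The main obstacle is step (2): quantifying \emph{how well} the $k$-dimensional family of rounding profiles approximates the target profile, with an error that decays exactly like $1/k$. This requires identifying the right ``hard'' profile to aim for — presumably the one coming from the extremal Gaussian-copula structure behind the Grothendieck constant (the $\arcsin$-type kernel associated with an optimal Gaussian embedding) — and then showing that a $k$-dimensional partition-valued rounding, after the Gaussian projection and the obligatory re-normalization that keeps the new vectors on the sphere, can match it up to $O(1/k)$. A secondary difficulty is ensuring the preprocessing is genuinely \emph{oblivious}, i.e.\ that the nonlinear maps $T,S$ can be defined on all of $\S^\infty$ independently of the configuration; this should follow from writing the inverse profile $\Psi^{-1}(t/K)$ as a power series with nonnegative coefficients summing to $1$ (so that it is realized by a convex combination of tensor powers, exactly as in the classical Krivine construction), which in turn is a positivity/analytic condition that the approximating profile must be shown to satisfy.
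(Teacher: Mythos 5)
Your plan correctly identifies the Krivine tensor trick for the oblivious preprocessing (your step (3)) and the positivity condition on the inverse profile that makes it work, but the heart of the argument --- your step (2), which you yourself flag as ``the main obstacle'' --- is left unresolved, and the way you have set it up makes it much harder than it needs to be. The paper does not approximate ``the profile implicitly used by a near-optimal configuration.'' Instead it invokes a known duality/factorization theorem (Pisier, Blei): for every $k$ there is a single Borel probability measure $\nu_k$ on $\{-1,1\}^{\S^{k-1}}\times\{-1,1\}^{\S^{k-1}}$ with
$\int f(x)g(y)\,d\nu_k(f,g)=\langle x,y\rangle/K_G$ \emph{exactly}, for all unit vectors $x,y\in\S^{k-1}$. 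This single step supplies, for free, both the ``rich enough family of profiles'' you are hoping to construct in step (2) and the universal measure you plan to extract by compactness in step (4). Without it, it is not clear how you would even define the target profile, let alone prove $O(1/k)$-closeness; and your configuration-by-configuration approach followed by a weak-$*$ limit is essentially an attempt to re-derive this duality theorem without recognizing it as such.

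The second missing ingredient is the identification of where the $1+O(1/k)$ loss actually comes from. After extending $\nu_k$ radially to $\R^k$, the only error in the scheme is that the normalized Gaussian projections $Gx'/\|Gx'\|_2$, $Gy'/\|Gy'\|_2$ do not have expected inner product exactly $\langle x',y'\rangle$ but rather $f_k(\langle x',y'\rangle)$ where
$f_k(t)=\E\bigl[\bigl\langle G_1/\|G_1\|_2,\,(tG_1+\sqrt{1-t^2}G_2)/\|tG_1+\sqrt{1-t^2}G_2\|_2\bigr\rangle\bigr]$.
The entire quantitative content of the theorem is the explicit analysis of this one function: its Taylor coefficients $a_n(k)$ satisfy $a_0(k)=1-\tfrac{1}{2k}+O(1/k^2)$, $a_1(k)\asymp 1/k$, and $\sum_{n\ge 2}a_n(k)\lesssim 1/k^2$, so $f_k$ is within $O(1/k)$ of the identity on $\overline{\D}$; one then inverts $f_k$ (via Rouch\'e's theorem), bounds the coefficients of $f_k^{-1}$ by Cauchy's formula, and finds a radius $c_k=1-O(1/k)$ at which the absolute coefficient sum of $f_k^{-1}$ equals $1$ (after reserving mass $O(1/k)$ for two extra orthogonal coordinates that make $S(x)$ and $T(y)$ unit vectors). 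Your plan contains neither the function $f_k$ nor any substitute mechanism producing the $1/k$ rate, so as written it does not constitute a proof.
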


 One might initially believe that restricting attention to Krivine schemes in order to prove Grothendieck's inequality is too restrictive: the role of random Gaussian matrices is not mandated by the problem at hand, and moreover it might be beneficial to choose the signs $\e_1,\ldots,\e_m,\d_1,\ldots,\d_n\in \{-1,1\}$ via a procedure that depends on the given matrix $(a_{ij})$. Nevertheless, Theorem 1.1 shows that oblivious Krivine rounding schemes capture $K_G$ exactly.
 

To date, the best known upper estimates on $K_G$ were obtained via oblivious Krivine schemes. Krivine introduced~\cite{Kri77} this approach to Grothendieck's inequality in 1977 in order to obtain a new upper bound on $K_G$. His argument used a one-dimensional Krivine scheme that is based on a {\em deterministic} partition of $\R$: in Krivine's original work the measure $\mu$ is supported on the pair $(\sign(\cdot),\sign(\cdot))\in \{-1,1\}^{\R}\times \{-1,1\}^{\R}$. Krivine conjectured~\cite{Kri77} that the bound thus obtained is actually the exact value of the Grothendieck constant. Clearly, if true, Krivine's conjecture would have implied that oblivious Krivine schemes yield an exact evaluation of $K_G$, and would have thus made Theorem~\ref{thm:main} redundant.

However, in 2011 Krivine's conjecture was refuted~\cite{BMMN11}, yielding the best known upper bound on $K_G$ via a {\em two-dimensional} Krivine rounding scheme. It was this development that motivated us to investigate the possible validity of Theorem~\ref{thm:main}. While at present there is no compelling conjecture as to the exact value of $K_G$ (though in~\cite{BMMN11} a candidate for this value is proposed based on a certain geometric conjecture that at this point seems quite speculative), Theorem~\ref{thm:main} justifies the focus on oblivious Krivine schemes. In essence, Theorem~\ref{thm:main} says that in order to understand the Grothendieck constant it suffices to focus on ways to partition Euclidean space so as to ensure the validity of~\eqref{eq:scheme identity} with $K$ as small as possible.

\section{Proof of Theorem~\ref{thm:main}}

Fix $k\in \N$. By a well-known duality argument (see~\cite[Thm.~3.4]{Pis12} and~\cite[Prop.~1.1]{Ble12})  there exists a Borel probability measure $\nu_k$ on $\{-1,1\}^{\S^{k-1}}\times \{-1,1\}^{\S^{k-1}}$ such that
\begin{equation}\label{eq:pisier}
\forall\, x,y\in \S^{k-1},\quad \int_{\{-1,1\}^{\S^{k-1}}\times \{-1,1\}^{\S^{k-1}}}f(x)g(y)\nu_k(f,g)=\frac{1}{K_G}\left\langle x,y \right\rangle.
\end{equation}

 By identifying $f\in \{-1,1\}^{\S^{k-1}}$ with its radial extension $x\mapsto f(x/\|x\|_2)$ (with the convention that the radial extension equals $1$ at $x=0$), the measure $\nu_k$ induces a probability measure $\mu_k$ on $\{-1,1\}^{\R^{k}}\times \{-1,1\}^{\R^{k}}$  that satisfies
 \begin{equation}\label{eq:normalized}
 \forall\, x,y\in \R^k\setminus \{0\},\quad \int_{\{-1,1\}^{\R^{k}}\times \{-1,1\}^{\R^{k}}}f(x)g(y)\mu_k(f,g)=\frac{1}{K_G}\left\langle\frac{x}{\|x\|_2},\frac{y}{\|y\|_2} \right \rangle.
 \end{equation}
Our goal will be to show that $\mu_k$ is the desired Krivine scheme. To this end it would be beneficial to rewrite the key requirement~\eqref{eq:scheme identity} using~\eqref{eq:normalized}. Let $G_1,G_2\in \R^k$ be i.i.d.\ standard Gaussian random vectors in $\R^k$. For $t\in [-1,1]$ define
\begin{equation}\label{eq:def f_k expectation}
f_k(t)\eqdef \E_{G_1,G_2}\left[\left\langle\frac{G_1}{\|G_1\|_2},\frac{tG_1+\sqrt{1-t^2}G_2}{\left\|tG_1+\sqrt{1-t^2}G_2\right\|_2} \right \rangle\right].
\end{equation}
Using this notation, combined with~\eqref{eq:normalized} and rotation invariance, the desired identity~\eqref{eq:scheme identity} becomes
\begin{equation}\label{eq:revised scheme identity}
\forall(i,j)\in \{1,\ldots,m\}\times \{1,\ldots,n\},\quad
\frac{1}{K_G}f_k\left(\left\langle x_i',y_j'\right\rangle \right)=\frac{1}{K} \langle x_i,y_j\rangle.
\end{equation}

By direct computation (see~\cite{Haa87} for $k=2$ and~\cite{BOV10} for general $k\ge 2$) for every $t\in [-1,1]$ we have $f_k(t)=
\sum_{n=0}^\infty a_n(k) t^{2k+1}$,
where
\begin{equation}\label{eq:def a_n}
a_n(k)\eqdef \frac{1}{4^n\sqrt{\pi}}\binom{2n}{n}\frac{\Gamma\left((k+1)/2\right)^2\Gamma\left(n+1/2\right)}
{\Gamma\left(k/2\right)\Gamma\left(n+1+k/2\right)}
=\frac{2}{k}\left(\frac{\Gamma\left(\frac{k+1}{2}\right)}{\Gamma\left(\frac{k}{2}\right)}\right)^2
\prod_{j=1}^n\frac{(2j-1)^2}{2j(k+2j)}.
\end{equation}
By Stirling's formula, for every integer $n\ge 2$ we have
\begin{equation}\label{eq:a_n bound}
a_n(k)\asymp \frac{k^{(k+1)/2}}{2^{k/2}\sqrt{n}(n+k/2)^{(k+1)/2}(1+k/(2n))^n}
\lesssim \frac{k^{(k+1)/2}}{k^22^{k/2}(n+k/2)^{(k+1)/2}}.
\end{equation}
Hence,
\begin{equation}\label{eq:2tail}
\sum_{n=2}^\infty a_n(k)\lesssim \frac{k^{(k+1)/2}}{k^22^{k/2}}\int_1^\infty\frac{dx}{(x+k/2)^{(k+1)/2}}\lesssim
\frac{k^{(k+1)/2}}{k^32^{k/2}(k/2)^{(k-1)/2}}\lesssim\frac{1}{k^2}.
\end{equation}
Also,
\begin{equation}\label{eq:first values}
a_0(k)=1-\frac{1}{2k}+O\left(\frac{1}{k^2}\right)\quad\mathrm{and} \quad a_1(k)=\frac{a_0(k)}{2(k+2)}\asymp\frac{1}{k}.
\end{equation}

Write $\D=\{z\in \C: |z|<1\}$. By~\eqref{eq:2tail} for every $z\in \overline \D$ we
have $f_k(z)= \sum_{n=0}^\infty a_n(k)z^{2k+1}$, and $f_k$ is
continuous on $\overline \D$ and analytic on $\D$. Moreover,
by~\eqref{eq:2tail} and~\eqref{eq:first values} we know that there
exists a universal constant $C\in (1,\infty)$ such that
$|f(z)-a_0(k)z|<C/k$ for every $z\in \partial \D$. We assume from now on that $k$ is large enough so that $a_0(k)-2C/k>0$. Then
$|a_0(k)z-f(z)|<|a_0(k)z-\zeta|$ for every $\zeta\in (a_0(k)-C/k)\D$
and $z\in \partial \D$. By Rouch\'e's theorem it follows that
$a_0(k)z-\zeta$ and $f(z)-\zeta$ have the same number of zeros in
$\D$. Hence $f_k^{-1}: (a_0(k)-C/k)\D\to \D$ is well defined and
analytic on $(a_0(k)-C/k)\D$.

For $w\in (a_0(k)-C/k)\D$ write
$f_k^{-1}(w)=\sum_{n=0}^\infty b_n(k)w^{2n+1}$ for some
$\left\{b_n(k)\right\}_{n=0}^\infty\subseteq \R$. Thus $b_0(k)=1/a_0(k)$ and $b_1(k)=-a_1(k)/a_0(k)^4= -1/(2(k+2)a_0(k)^3)$. Observe that for $k$ large enough we have  $1-4C/k\le (a_0(k)-2C/k)/a_0(k)\le \sum_{n=0}^\infty |b_n(k)|(a_0(k)-2C/k)^{2n+1}<\infty$, so that by continuity there exists $c_k\in (0,1-2C/k]$ satisfying $\sum_{n=0}^{\infty} |b_n(k)|c_k^{2n+1}=1-4C/k$.

\begin{lemma}\label{lem:c facts} We have $c_k\ge 1-O(1/k)$.
\end{lemma}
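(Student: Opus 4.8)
The plan is to exploit that $f_k$ is a small perturbation of the linear map $t\mapsto a_0(k)t$ whose perturbation has \emph{nonnegative} Taylor coefficients, and to control the coefficients $\{b_n(k)\}_{n\ge 1}$ of $f_k^{-1}$ by a dominating-series (majorant) argument; bounding the $b_n(k)$ via Cauchy's estimates alone would be far too lossy to give the sharp constant.

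First I would write $f_k(t)=a_0(k)t+e_k(t)$, where $e_k(t)\eqdef\sum_{n\ge 1}a_n(k)t^{2n+1}$. By \eqref{eq:def a_n} every $a_n(k)$ is positive, so $e_k$ is analytic on $\D$, continuous on $\overline\D$, and nondecreasing on $[0,1]$; moreover, taking $t=1$ in \eqref{eq:def f_k expectation} gives $f_k(1)=1$, hence $e_k(1)=\sum_{n\ge 1}a_n(k)=1-a_0(k)$. Applying $f_k$ to $f_k^{-1}$ on $(a_0(k)-C/k)\D$ yields $a_0(k)f_k^{-1}(w)+e_k\bigl(f_k^{-1}(w)\bigr)=w$, so with the notation $\phi(w)\eqdef\sum_{n\ge 0}|b_n(k)|w^{2n+1}$ and $h_k(w)\eqdef f_k^{-1}(w)-\tfrac{w}{a_0(k)}=\sum_{n\ge 1}b_n(k)w^{2n+1}$ we obtain
\begin{equation*}
h_k=-\frac1{a_0(k)}\bigl(e_k\circ f_k^{-1}\bigr).
\end{equation*}

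Then I would run the majorant argument. As formal power series with vanishing constant term, $f_k^{-1}$ is dominated coefficientwise in absolute value by $\phi$; since $\bigl|[P^{\ell}]_n\bigr|\le[Q^{\ell}]_n$ whenever $\bigl|[P]_j\bigr|\le[Q]_j$ for all $j$, and since every $a_n(k)\ge 0$, the composition $e_k\circ f_k^{-1}$ is in turn dominated coefficientwise by $e_k\circ\phi$. Hence, for every $\rho\ge 0$ lying inside the disk of convergence of $f_k^{-1}$ and satisfying $\phi(\rho)\le 1$,
\begin{equation*}
\sum_{n\ge 1}|b_n(k)|\,\rho^{2n+1}\ \le\ \frac1{a_0(k)}\,e_k\bigl(\phi(\rho)\bigr).
\end{equation*}
I would apply this at $\rho=c_k$: by its construction $c_k\le a_0(k)-2C/k<a_0(k)-C/k$, so $c_k$ is strictly inside the disk of convergence, and $\phi(c_k)=1-4C/k<1$; since $e_k$ is nondecreasing on $[0,1]$ this gives $\sum_{n\ge 1}|b_n(k)|c_k^{2n+1}\le a_0(k)^{-1}e_k(1-4C/k)\le a_0(k)^{-1}e_k(1)=(1-a_0(k))/a_0(k)$. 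Combining with the identity $1-4C/k=\phi(c_k)=\tfrac{c_k}{a_0(k)}+\sum_{n\ge 1}|b_n(k)|c_k^{2n+1}$ defining $c_k$ then gives
\begin{equation*}
c_k\ \ge\ a_0(k)\Bigl(1-\frac{4C}{k}\Bigr)-\bigl(1-a_0(k)\bigr)\ =\ 2a_0(k)-1-\frac{4C\,a_0(k)}{k},
\end{equation*}
which by \eqref{eq:first values} equals $1-O(1/k)$.

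The main obstacle is to make the majorant step fully rigorous: one must verify the coefficientwise domination of $e_k\circ f_k^{-1}$ by $e_k\circ\phi$ (the elementary power-series inequality quoted above together with positivity of the $a_n(k)$) and then justify summing these coefficient inequalities against $\rho^n$ — this is where it matters that $c_k$ lies strictly inside the disk of convergence of $f_k^{-1}$ and that $\phi(c_k)<1$, which forces $e_k\circ\phi$ to converge at $c_k$. Everything else reduces to bookkeeping with the asymptotics of $a_0(k)$ in \eqref{eq:first values}.
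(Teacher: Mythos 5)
Your proof is correct, but it takes a genuinely different route from the paper's. The paper works with the explicit expansion $f_k^{-1}(w)-b_0(k)w-b_1(k)w^3$, shows via \eqref{eq:2tail} and \eqref{eq:first values} that this remainder is $O(1/k^2)$ uniformly on $(a_0(k)-C/k)\D$, and then extracts the bound $|b_n(k)|\lesssim k^{-2}(a_0(k)-C/k)^{-(2n+1)}$ for $n\ge 2$ from Cauchy's integral formula; substituting these bounds into the defining equation $\sum_n|b_n(k)|c_k^{2n+1}=1-4C/k$ and summing the resulting geometric series yields $c_k\ge 1-O(1/k)$. (So your opening remark that Cauchy estimates are ``far too lossy'' is not quite right --- they suffice once applied to the remainder after subtracting the first two terms, because of the $k^{-2}$ prefactor; a naive Cauchy estimate on $f_k^{-1}$ itself would indeed be too lossy.) Your argument instead exploits two structural features the paper does not use at this point: the positivity of all the $a_n(k)$ and the normalization $f_k(1)=\sum_n a_n(k)=1$, which together let you majorize $\sum_{n\ge 1}|b_n(k)|c_k^{2n+1}$ by $a_0(k)^{-1}e_k(\phi(c_k))\le(1-a_0(k))/a_0(k)$ via the coefficientwise domination of $e_k\circ f_k^{-1}$ by $e_k\circ\phi$. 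I checked the key steps: the formal identity $h_k=-a_0(k)^{-1}(e_k\circ f_k^{-1})$ follows from $f_k\circ f_k^{-1}=\mathrm{id}$ near $0$; the majorant inequality for compositions is the standard one; the interchange $(e_k\circ\phi)(c_k)=e_k(\phi(c_k))$ is justified by Tonelli since all coefficients are nonnegative, $c_k\le a_0(k)-2C/k$ lies strictly inside the disk of convergence of $f_k^{-1}$, and $\phi(c_k)=1-4C/k<1$; and the final arithmetic $c_k\ge 2a_0(k)-1-4Ca_0(k)/k=1-O(1/k)$ is right. Your approach buys a cleaner, quantitatively sharper estimate that bypasses the Cauchy-formula step and the tail bound \eqref{eq:2tail} entirely, at the cost of relying on positivity of the $a_n(k)$; the paper's argument is more robust in that it would survive if the higher coefficients were merely small in absolute value rather than nonnegative.
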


Assuming the validity of Lemma~\ref{lem:c facts} for the moment, we  conclude the proof of Theorem~\ref{thm:main}.

\begin{proof}[Proof of Theorem~\ref{thm:main}] This is a slight variant of Krivine's  original argument~\cite{Kri77}. Define two mappings $S,T:\S^\infty \to \left(\bigoplus_{n=0}^\infty (\ell_2)^{\otimes(2n+1)}\right)\oplus \R^2\eqdef \H\cong\ell_2$ by
$$
S(x)\eqdef \left(\left(|b_{n}(k)|^{1/2}c_k^{(2n+1)/2}x^{\otimes (2n+1)}\right)_{n=0}^\infty,\left(\sqrt{\frac{4C}{k}},0\right)\right),
$$
and
$$
T(x)\eqdef \left(\left(\sign(b_n(k))|b_{n}(k)|^{1/2}c_k^{(2n+1)/2}x^{\otimes (2n+1)}\right)_{n=0}^\infty,\left(0,\sqrt{\frac{4C}{k}}\right)\right).
$$

For every $x\in \S^\infty$ we have $
 \|S(x)\|_\H^2=\|T(x)\|_\H^2=\sum_{n=0}^\infty
|b_{n}(k)|c_k^{2n+1}+4C/k=1$.
Moreover,
$
f_k\left(\langle S(x),T(y)\rangle_\H\right) =f_k\left(\sum_{n=0}^\infty b_{n}(k)c_k^{2n+1}\langle x,y\rangle^{2n+1}\right)= f_k(f_k^{-1}(c_k\langle x,y\rangle))=c_k\langle x,y\rangle.
$
Thus, recalling the formulation of the desired identity appearing in~\eqref{eq:revised scheme identity}, $\mu_k$ is an oblivious $k$-dimensional Krivine scheme of quality $K_G/c_k=(1+O(1/k))K_G$, as desired.
\end{proof}


It remains to prove Lemma~\ref{lem:c facts}.
\begin{proof}[Proof of Lemma~\ref{lem:c facts}]
For every $w\in (a_0(k)-C/k)\D$ write $z=f_k^{-1}(w)\in \D$. Then using~\eqref{eq:first
values},
\begin{multline}\label{eq:three term identity}
f_k^{-1}(w)-b_0(k) w - b_1(k) w^3=f_k^{-1}(w)-\frac{w}{a_0(k)}+\frac{w^3}{2(k+2)a_0(k)^3}\\=
-\frac{f_k(z)-a_0(k)z-a_1(k)z^3}{a_0(k)}-\frac{(a_0(k)z-f_k(z))(a_0(k)^2z^2+a_0(k)zf_k(z)+f_k(z)^2)}
{2(k+2)a_0(k)^3}.
\end{multline}
By~\eqref{eq:2tail} and~\eqref{eq:first values} we have $|f_k(z)-a_0(k)z-a_1(k)z^3| \lesssim 1/k^2$ and $|f_k(z)-a_0(k)z|\lesssim 1/k$. Moreover, due to~\eqref{eq:first values} we have $a_0(k)\asymp 1$ and therefore $|a_0(k)^2z^2+a_0(k)zf_k(z)+f_k(z)^2|\lesssim 1$. Consequently, it follows from~\eqref{eq:three term identity} that
\begin{equation*}
\left|f_k^{-1}(w)-b_0(k) w - b_1(k) w^3\right|\lesssim\frac{1}{k^2}.
\end{equation*}
Hence, by Cauchy's integral formula for every integer $n\ge 2$ and every $r\in (0,a_0(k)-C/k)$,
\begin{equation*}\label{eq:b bound}
|b_n(k)|=\left|\frac{1}{2\pi i}\oint_{r\partial \D}\frac{f_k^{-1}(w)-b_0(k) w - b_1(k) w^3}{w^{2n+2}}dw\right|\lesssim \frac{1}{k^2r^{2n+1}}.
\end{equation*}
Thus there is a universal constant $A\in (0,\infty)$ such that $|b_n(k)|\le Ak^{-2}(a_0(k)-C/k)^{-(2n+1)}$ for   $n\ge 2$. If $c_k\ge a_0(k)-C/k$ then we are done. Assume therefore that $c_k< a_0(k)-C/k$, in which case, recalling that $b_0(k)=1/a_0(k)$ and $b_1(k)= -1/(2(k+2)a_0(k)^3)$, we have
\begin{align}
1=\sum_{n=0}^\infty |b_n(k)| c_k^{2n+1}&\le \frac{c_k}{a_0(k)}+\frac{c_k^3}{2(k+2)a_0(k)^2}+ \frac{A}{k^2}\sum_{n=2}^\infty \left(\frac{c_k}{a_0(k)-C/k}\right)^{2n+1} \nonumber \\
&= \frac{c_k}{a_0(k)}+\frac{c_k^3}{2(k+2)a_0(k)^2}+ \frac{A}{k^2}\cdot \frac{(c_k/(a_0(k)-C/k))^5}{1-(c_k/(a_0(k)-C/k))^2}.\label{eq:delta bound}
\end{align}
By~\eqref{eq:first values}, a straightforward computation shows that~\eqref{eq:delta bound} implies that $c_k\ge 1-O(1/k)$.
\end{proof}

\subsection*{Acknowledgements} We are grateful to Thomas Vidick for helpful suggestions.

\bibliographystyle{abbrv}
\bibliography{krivine}
\end{document}

Let $\H(\R^n)$ denote the space of all closed subsets of $\R^n$,
equipped with the Hausdorff metric.

\begin{theorem}

\end{theorem}

\begin{lemma}
For every $k\in \N$ there exists a probability measure $\mu_k$ on
$\{-1,1\}^{S^{n-1}}\times \{-1,1\}^{S^{n-1}}$ such that
$$
\int_{\{-1,1\}^{S^{k-1}}\times \{-1,1\}^{S^{k-1}}}f(x)g(y)d\mu_k(f,g)=\frac{1}{K_G(n)} \left\langle x,y \right\rangle
$$
\end{lemma}